\tikzstyle{block} = [rectangle,draw,fill=blue!20,text width=5em,text centered,rounded corners,minimum height=4em]
\newtheorem*{namedtheorem}{\theoremname}
\newcommand{\theoremname}{testing}
\newtheorem{theorem}{Theorem}[section]
\newtheorem{proposition-definition}[theorem]{Proposition-Definition}
\newtheorem{corollary}[theorem]{Corollary}
\theoremstyle{definition}
\newtheorem{definition}[theorem]{Definition}
\newtheorem{remark}[theorem]{Remark}
\theoremstyle{remark}
\newcommand\cM{\mathcal{M}}
\newcommand\PP{\mathbb{P}}
\newcommand\ZZ{\mathbb{Z}}
\newcommand\rH{\mathrm{H}}
\title{Projective Equivalence of Smooth Hypersurfaces via Cyclic Covers}
\author{Zhiyuan Li}
\address{Shanghai Center for Mathematical Science, Shanghai, China.}
\email{\url{zhiyuan\_li@fudan.edu.cn}}
\author{Zhichao  Tang}
\address{Shanghai Center for Mathematical Science, Shanghai, China.}
\email{\url{zctang19@fudan.edu.cn}}
\date{October 2025}
\subjclass[2020]{14J70, 14D20}
\begin{document}

\begin{abstract}
    In this paper,  we prove that for any smooth hypersurface $Y$ of degree $d$ in $\mathbb{P}^{n+1}_k$, the cyclic $d$-fold cover $\widetilde{Y} \to \mathbb{P}^{n+1}_k$ branched along $Y$ completely characterizes $Y$ up to projective equivalence. This solves a question asked by Huybrechts in \cite[\S \textbf{1}.5.6]{Huy23}.
\end{abstract}

\maketitle

\section{Introduction}

Let \( k \) be an algebraically closed field of characteristic \( 0 \). Let \(\cM_d^n\) denote the moduli space of smooth hypersurfaces of degree \( d \) in \(\PP^{n+1}_k\). There is a natural map  
\begin{equation}\label{eq:moduli}  
    \Psi_d^n: \cM_{d}^n \to \cM_{d}^{n+1}  
\end{equation}  
defined by sending a hypersurface \( Y \subseteq \PP^{n+1}_k \) to its associated \( d \)-cyclic covering \(\widetilde{Y} \to \PP^{n+1}_k\) branched over \( Y \).  
Explicitly, if \( Y = V(F) \) for some homogeneous polynomial \( F \in k[x_0, \ldots, x_{n+1}] \), then \(\widetilde{Y} = V(x_{n+2}^d - F) \subseteq \PP^{n+2}_k\),  
equipped with a canonical \(\rho\)-action, where \(\rho\) is a primitive \(d\)-th root of unity. This action is given by \(x_i \mapsto x_i\) for \(0 \leq i \leq n+1\) and \(x_{n+2} \mapsto \rho \cdot x_{n+2}\), realizing \(\rho\) as the generator of the Galois group of the covering.  

This construction has proven instrumental in relating moduli spaces of cubic hypersurfaces across different dimensions. A fundamental question posed by Huybrechts \cite[Remark \textbf{1}.5.22]{Huy23} is whether the map \(\Psi_d^n\) in \eqref{eq:moduli} is injective. For \( n \geq 2 \), this is equivalent to asking whether the isomorphism class of \( Y \) is uniquely determined by that of its cyclic cover \(\widetilde{Y}\).  

For generic hypersurfaces of dimension \( n \geq 2 \), injectivity can possibly be established using the Hessian determinant (see \cite[Theorem 4.6]{MR2655320} for the cubic case). Leveraging this, Allcock, Carlson, and Toledo established the link between the moduli space of cubic surfaces (respectively, threefolds) with the moduli space of cubic threefolds (respectively, fourfolds) in \cite{ACT02, ACT11}.  

In this paper, we resolve Huybrechts' question in full generality. Our method makes use of the structure result of outer Galois points, yielding a very simple proof of the following:  
\begin{theorem}\label{mainthm}  
    The map \(\Psi_d^n: \cM_{d}^n \to \cM_{d}^{n+1}\) is injective for all \( n \geq 2 \) and \( d \geq 2 \).  
\end{theorem}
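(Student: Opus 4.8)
The plan is to recover $Y$ from $\widetilde{Y}$ as the branch locus of a distinguished Galois point and then to show that this branch locus is a projective invariant of $\widetilde{Y}$. First, I would record that $p_0 := [0:\cdots:0:1]$ is an \emph{outer Galois point} of $\widetilde{Y} = V(x_{n+2}^d - F)$: linear projection away from $p_0$ is precisely the cyclic cover $\widetilde{Y} \to \PP^{n+1}_k$, whose function-field extension is the Kummer extension $x_{n+2}^d = F$, with cyclic Galois group $\langle\rho\rangle$ and branch divisor exactly $Y = V(F)$. Thus $Y$ sits inside $\widetilde{Y}$ as the branch locus of $p_0$. The case $d = 2$ is immediate, since all smooth quadrics are projectively equivalent and $\cM_2^n$ is a single point, so I assume $d \geq 3$ from now on.

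Next, suppose $\Psi_d^n(Y_1) = \Psi_d^n(Y_2)$, so that $\widetilde{Y}_1 \cong \widetilde{Y}_2$. The first real step is to upgrade this abstract isomorphism to a linear one. Since $n \geq 2$, each $\widetilde{Y}_i \subseteq \PP^{n+2}_k$ is a smooth hypersurface of dimension $n+1 \geq 3$, so the Grothendieck--Lefschetz theorem gives $\mathrm{Pic}(\widetilde{Y}_i) = \ZZ\cdot\cO(1)$ with $\cO(1)$ the ample generator. Any isomorphism preserves the ample generator, hence is induced by the complete system $|\cO(1)|$ and extends to some $\phi \in \mathrm{PGL}_{n+3}(k)$ with $\phi(\widetilde{Y}_1) = \widetilde{Y}_2$. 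This is the only place the hypothesis $n \geq 2$ is needed.

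Then I would transport the Galois structure along $\phi$. Being linear, $\phi$ sends the outer Galois point $p_0$ of $\widetilde{Y}_1$ to an outer Galois point $q := \phi(p_0)$ of $\widetilde{Y}_2$, and it carries the fixed hyperplane $\{x_{n+2}=0\}$ of the associated order-$d$ homology onto the fixed hyperplane $H_q$ of its conjugate. Consequently $\phi$ restricts to a projective isomorphism identifying $Y_1 = \widetilde{Y}_1 \cap \{x_{n+2}=0\}$ with the branch divisor $B_q = \widetilde{Y}_2 \cap H_q$ of $q$; in particular $Y_1$ is projectively equivalent to $B_q$. On the other hand $\widetilde{Y}_2$ also carries its tautological outer Galois point $p_0$, whose branch divisor is $Y_2$. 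Hence the theorem reduces to a single assertion: the branch divisor attached to an outer Galois point of a smooth hypersurface is independent, up to projective equivalence, of the chosen Galois point.

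Proving this independence is where I expect the real difficulty to lie, and it is exactly where the structure result for outer Galois points is meant to be used. The input is the normal form: an outer Galois point $p$ of a smooth hypersurface $X \subseteq \PP^N_k$ of degree $d \geq 3$ admits coordinates with $p = [0:\cdots:0:1]$ and $X = V(x_N^d - G(x_0,\ldots,x_{N-1}))$, the Galois group being generated by the homology fixing $H_p = \{x_N = 0\}$, so that the branch divisor is $X \cap H_p = V(G)$. The hard part will be to deduce from the classification of the possible configurations of outer Galois points that, for two such points $p, p'$, the hyperplane sections $X \cap H_p$ and $X \cap H_{p'}$ are projectively equivalent: this is automatic when the outer Galois point is unique, and must be checked directly for the exceptional highly symmetric (Fermat-type) hypersurfaces that carry several of them. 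Granting this, applying it to the two Galois points $p_0$ and $q$ of $\widetilde{Y}_2$ yields $Y_2 \cong B_q \cong Y_1$ up to projective equivalence, which is the desired injectivity.
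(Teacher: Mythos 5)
Your overall strategy is the paper's: realize $Y$ inside $\widetilde{Y}$ as the branch divisor of the tautological outer Galois point $p_0$, transport outer Galois points along a linear equivalence of the covers, and invoke Yoshihara's structure theorem. Your endgame of tracking the axis (fixed hyperplane) of the Galois homology is legitimate, and in fact would replace the paper's final computation, where the equivalence is composed with a coordinate transposition and then shown to be block diagonal. (Your Grothendieck--Lefschetz step is not wrong, but it addresses the isomorphism-class refinement that the paper defers to its corollary; injectivity of $\Psi_d^n$ on moduli points is already a statement about projective equivalence.) The problem is that you stop exactly at the statement the whole proof hinges on: the claim that the branch divisor of an outer Galois point of a smooth hypersurface is, up to projective equivalence, independent of which Galois point is chosen. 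You flag it as ``the hard part,'' defer it to an unspecified case check on the Fermat-type hypersurfaces, and then write ``granting this.'' As submitted, the argument is therefore incomplete at its crux: when $\delta(\widetilde{Y}_2) > 1$, nothing you wrote rules out that $q = \phi(p_0)$ and $p_0$ have inequivalent branch divisors, which is precisely the scenario the theorem must exclude.

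The gap is real but small, and the structure theorem you already quoted closes it in one line. By Theorem~\ref{thm:structure}(ii), if $\delta(X) = r+1 > 0$ then after a coordinate change $X = V\left(x_{N}^d + \cdots + x_{N-r}^d + G(x_0,\dots,x_{N-r-1})\right)$ with $N = n+2$; since this normal form visibly has the $r+1$ coordinate points of the Fermat variables as outer Galois points and $\delta(X)=r+1$, those points $P_i = [0:\cdots:0:1:\underbrace{0:\cdots:0}_{i}]$ are \emph{all} of them, with axes $H_{P_i} = \{x_{N-i} = 0\}$. The transposition $\sigma$ of the coordinates $x_{N-i}$ and $x_N$ is a linear automorphism preserving $X$ with $\sigma(P_0) = P_i$, hence $\sigma(H_{P_0}) = H_{P_i}$ and $\sigma(X \cap H_{P_0}) = X \cap H_{P_i}$: all branch divisors are projectively equivalent. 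This transposition is exactly how the paper concludes (it composes $f$ with $\sigma$ to fix $P_0$ and then verifies the resulting map is block diagonal); inserted at the point where you wrote ``granting this,'' it completes your proof.
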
  

As a consequence, we refine the global Torelli theorem for cubic surfaces and threefolds via cyclic covers (cf. \cite[Theorem \textbf{4}.4.9]{Huy23}): \( Y \cong Y' \) if and only if there exists a Hodge isometry \(\rH^m(\widetilde{Y}, \ZZ) \cong \rH^m(\widetilde{Y}', \ZZ)\), where \( m = \dim \widetilde{Y} \). Crucially, Theorem \ref{mainthm} eliminates the need for the \(\ZZ[\rho]\)-equivariance assumptions.

\section{Cyclic cover and outer Galois points}

We recall the connection between cyclic covers and outer Galois points for smooth hypersurfaces, following Yoshihara \cite{Yo03}.

\begin{definition}
    Let \(X = V(F) \subseteq \PP^{n+1}_k\) be a smooth hypersurface of degree \(d\). 
    A point \(P \in \PP^{n+1}_k \setminus X\) is an \emph{outer Galois point} if the projection \(\pi_P \colon X \to H\) from \(P\) to a hyperplane \(H \cong \PP^n_k\) induces a Galois extension \(k(X)/k(H)\),
    where the field extension is independent of  the choice of $H$ with $P\notin H$.
    The Galois group is denoted \(G_P = \operatorname{Gal}(k(X)/k(H))\).
\end{definition}

By \cite[Theorem 5]{Yo03}, if $X$ has an outer Galois point $P$, \(G_P\) is cyclic of order \(d\), and \(\pi_P\) is a cyclic cover branched along a degree \(d\) hypersurface in $\PP^n_k$. 
After coordinate change, \(X\) admits the normalized form \[F = x_{n+1}^d + G(x_0, \dots, x_{n})\] with \(\deg G = d\). 
Conversely, any hypersurface of the form \( V(x_{n+1}^d + G) \subseteq \PP^{n+1}_k\) has \([0:\cdots:0:1]\) as an outer Galois point.

Let \(\delta(X)\) denote the number of outer Galois points. The following structural result is the key ingredient:

\begin{theorem}[{cf.~\cite[Proposition 11]{Yo03}}]\label{thm:structure} 
Let \(X\) be a smooth hypersurface in \(\PP^{n+1}_k\) of degree \(d \geq 3\). Then 
  \begin{enumerate}[(i)]
      \item \(0\leq \delta(X) \leq n+2\).
      \item If \(\delta(X)> 0\),  \(X\) is projectively equivalent to 
        \begin{equation}
            V\left( x_{n+1}^d + x_{n}^d + \cdots + x_{n+1-r}^d + G(x_{0}, \ldots, x_{n-r}) \right)\,,
        \end{equation}
        where  $r=\delta(X)-1$  and \(G\) is homogeneous of degree \(d\). 
  \end{enumerate}
\end{theorem}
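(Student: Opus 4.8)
The plan is to reformulate the outer Galois condition in terms of first polars and then to show that all the associated automorphisms can be simultaneously diagonalized. Write $D_P F = \sum_{i=0}^{n+1} p_i\,\partial F/\partial x_i$ for the first polar of $F$ at $P = [p_0:\cdots:p_{n+1}]$. The normalized form recalled above gives immediately that if $P$ is an outer Galois point, then in suitable coordinates $F = x_{n+1}^d + G(x_0,\ldots,x_n)$ with $P = [0:\cdots:0:1]$, so that $D_P F = d\,x_{n+1}^{d-1}$ is the $(d-1)$-st power of a single linear form $\ell_P$; the hyperplane $H_P = V(\ell_P)$ is the axis fixed pointwise by the generator $\sigma_P$ of $G_P$, which acts as the homology $x_{n+1}\mapsto \rho\,x_{n+1}$ with isolated fixed point $P$. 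Thus each outer Galois point determines a homology of order $d$ with center $P$ and axis $H_P$.

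First I would establish the key geometric claim: any two distinct outer Galois points $P,Q$ satisfy $Q\in H_P$ (and, symmetrically, $P\in H_Q$). To prove it, normalize $P=[0:\cdots:0:1]$ and $F=x_{n+1}^d+G(x_0,\ldots,x_n)$, and write $Q=[q_0:\cdots:q_{n+1}]$ with $\bar Q=[q_0:\cdots:q_n]$. Since $Q$ is outer Galois, $D_Q F = q_{n+1}\,d\,x_{n+1}^{d-1} + (D_{\bar Q}G)(x_0,\ldots,x_n)$ must equal $\ell_Q^{\,d-1}$ for some linear form $\ell_Q=\sum_i \beta_i x_i$. The left-hand side involves only the powers $x_{n+1}^{d-1}$ and $x_{n+1}^{0}$, so expanding $\ell_Q^{\,d-1}$ and matching the coefficient of $x_{n+1}^{j}$ for $1\le j\le d-2$ (this is where $d\ge 3$ enters) forces, since $\binom{d-1}{j}\neq 0$ in characteristic $0$, either $\beta_{n+1}=0$ or $\beta_0=\cdots=\beta_n=0$. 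In the first case the coefficient of $x_{n+1}^{d-1}$ yields $q_{n+1}=0$, which is exactly $Q\in H_P$.

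The main obstacle is to rule out the second case $\ell_Q\propto x_{n+1}$, and this is where smoothness is essential. Here matching coefficients forces $D_{\bar Q}G\equiv 0$, i.e. the first polar of $G$ at $\bar Q$ vanishes identically. I would argue that $D_{\bar Q}G\equiv 0$ means $G(x+t\bar Q)=G(x)$ for all $t$, so $\bar Q$ is a vertex of the branch hypersurface $V(G)\subseteq\PP^n$; differentiating this translation-invariance and evaluating at the origin shows $\partial G/\partial x_i(\bar Q)=0$ for all $i$ and $G(\bar Q)=0$, so $\bar Q\in\mathrm{Sing}(V(G))$. Since $X=V(x_{n+1}^d+G)$ is smooth if and only if $V(G)$ is smooth, and $\bar Q\neq 0$ because $Q\neq P$, this is a contradiction; hence the second case never occurs and the claim is proved.

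With the claim in hand I would finish as follows. For distinct outer Galois points the fixed-locus analysis of the homologies shows $\sigma_P(Q)=Q$ (as $Q\in H_P$) and $\sigma_P(H_Q)=H_Q$ (as $P\in H_Q$, every hyperplane through $P$ being $\sigma_P$-invariant); since a homology is determined by its center, axis, and eigenvalue, $\sigma_P\sigma_Q\sigma_P^{-1}=\sigma_Q$, so the homologies $\sigma_{P}$ pairwise commute. Being of finite order in characteristic $0$ they are semisimple, hence simultaneously diagonalizable; in a common eigenbasis each $\sigma_{P_i}$ scales exactly one coordinate, namely its center $P_i$, and distinct outer Galois points give distinct coordinate vectors. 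Thus the centers are linearly independent, whence $\delta(X)\le n+2$, proving (i). For (ii), let $J$ be the set of $\delta(X)$ coordinates indexing the centers; for each $j\in J$ the normalized form at $P_i=e_j$, whose axis is the coordinate hyperplane $\{x_j=0\}$, shows that $F$ has no monomial of intermediate $x_j$-degree, so every monomial has $x_j$-degree $0$ or $d$. A degree count then forces $F=\sum_{j\in J}c_j x_j^d + G(x_0,\ldots,x_{n+1-\delta(X)})$ with $c_j=F(e_j)\neq 0$, and rescaling the coordinates $x_j$ for $j\in J$ normalizes $c_j=1$, yielding the stated form with $r=\delta(X)-1$.
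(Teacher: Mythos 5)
The paper does not actually prove this statement: it is quoted directly from Yoshihara \cite[Proposition 11]{Yo03}, and the authors' own argument begins only afterwards. So there is no internal proof to compare against; your write-up is a self-contained replacement for the citation, and its main structure is sound. Your route --- encoding the outer Galois condition in the first polar $D_PF$ being proportional to a $(d-1)$-st power of a linear form, proving the mutual incidence $Q \in H_P$ and $P \in H_Q$ by matching coefficients of $x_{n+1}^j$ for $1 \le j \le d-2$ (this is exactly where $d \ge 3$ enters), ruling out the degenerate case $\ell_Q \propto x_{n+1}$ because $D_{\bar Q}G \equiv 0$ would make $[\bar Q]$ a singular point of $V(G)$ and hence of $X$, and then deducing that the associated homologies commute and are simultaneously diagonalizable --- is a clean linear-algebra argument that delivers the bound $\delta(X) \le n+2$ and the normal form in one stroke.

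Two points should be tightened. First, commutation in $\mathrm{PGL}_{n+2}$ of finite-order elements does not by itself give simultaneous diagonalizability: commuting projective transformations need not admit commuting matrix lifts (in $\mathrm{PGL}_2$, the classes of $\mathrm{diag}(1,-1)$ and the coordinate swap commute projectively but their lifts anticommute). The fix is already implicit in your own argument: take the canonical linear lift $A_P$ of each homology, namely the identity on the cone over $H_P$ and multiplication by $\rho$ on the line over $P$. Since $Q \in H_P$ forces $A_PQ = Q$ and $H_Q$ is $\sigma_P$-stable, the conjugate $A_PA_QA_P^{-1}$ is again the identity on the cone over $H_Q$ and multiplies $Q$ by $\rho$, hence equals $A_Q$ \emph{as a matrix}; so the lifts themselves commute and simultaneous diagonalization is legitimate. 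Second, in part (ii) the invariance of $X$ under $\sigma_{P_i}$ only gives $F \circ \sigma_{P_i} = \lambda F$ for some scalar $\lambda$, so a priori all monomials could have $x_j$-degree equal to a single intermediate value $m_0$; you should note that the coefficient of $x_j^d$ is $F(e_j) \ne 0$ (an outer Galois point lies off $X$ by definition), which forces $\lambda = 1$ and hence $x_j$-degrees in $\{0,d\}$ only. With these two clarifications the proof is complete.
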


\begin{proof}
 The original statement is over complex numbers $\mathbb{C}$. Its proof can be extended to any algebraically closed field of characteristic zero.
 We outline the key steps:
\vspace{.1cm}

\noindent \textbf{(i) Bound on \(\delta(X)\):}
\begin{itemize}
    \item For any outer Galois point \(P\), the projection \(\pi_P: X \rightarrow H\) induces a Galois extension \(K/K_P\) of degree \(d\), where \(K = k(X)\) and \(K_P = k(H)\).
    \item By the analog of \cite[Lemma 1]{Yo03}, elements of \(\mathrm{Gal}(K/K_P)\) induce projective automorphisms of \(X\).
   \item Outer Galois points are \emph{independent} (i.e., no three are collinear) by an inductive argument using hyperplane sections (cf. \cite[Lemma 13]{Yo03}). 
    \item Since independent points in \(\mathbb{P}^{n+1}\) lie in general position, we obtain \(\delta(X) \leq n+2\).
\end{itemize}
\vspace{.1cm}

\noindent \textbf{(ii) Structure when \(\delta(X) > 0\):}
Let \(\delta(X) = r + 1\) with \(r \geq 0\), and let \(P_0, \dots, P_r\) be distinct outer Galois points.
\begin{itemize}
    \item By independence, choose coordinates \([x_0 : \cdots : x_{n+1}]\) such that \(P_i\) is the \(i\)-th coordinate point.
    \item For each \(P_i\), the Galois group \(G_{P_i}\) is cyclic of order \(d\) (by the analog of \cite[Theorem 5]{Yo03}). 
    As $k$ is algebraically closed, primitive \(d\)-th roots of unity exist in \(k\), and a generator \(\sigma_i\) acts diagonally as:
        \[
        \sigma_i = \operatorname{diag}[\rho, \dots, \rho, 1, \rho, \dots, \rho]\,,
        \]
        where \(1\) is in the \(i\)-th position and \(\rho\) is a primitive \(d\)-th root of unity.
    \item The defining polynomial \(F\) of $X$ must be invariant under each \(\sigma_i\) (up to scalars). This forces \(F\) to contain terms \(x_i^d\) for \(i = n+1-r, \dots, n+1\) (after reindexing), with no mixed terms involving these variables.
    \item Thus, \(F\) takes the form:
        \[
        F = x_{n+1}^d + x_n^d + \cdots + x_{n+1-r}^d + G(x_0, \dots, x_{n-r})\,,
        \]
        where \(G\) is homogeneous of degree \(d\). Smoothness of \(X\) ensures the coefficients of \(x_i^d\) are non-zero and \(G\) is general.
    \item Any such \(X\) is projectively equivalent to this normal form.
\end{itemize}
\end{proof}

\begin{remark}
Most of \cite{Yo03} assumes $d \geq 4$ since it deals with both inner and outer Galois points. However, for outer Galois points specifically, the weaker condition $d \geq 3$ suffices (see \cite[Lemma 13]{Yo03} and \cite[Theorem 1.2]{zbMATH06268338}).
    This distinction arises because
        \begin{itemize}
            \item Projection from an inner point gives a degree $d-1$ extension;
            \item Projection from an outer point gives a degree $d$ extension.
        \end{itemize}
    Since quadratic extensions are always Galois, non-triviality requires extensions of degree $\geq 3$. Thus, inner Galois points require $d \geq 4$, while outer Galois points require only $d \geq 3$.
\end{remark}

\section{Proof of the main theorem}

When $d<3$, this is trivial. Let us assume that $d\geq 3$. 
Let \(Y_1 = V(F_1)\) and \(Y_2 = V(F_2)\) be smooth hypersurfaces in \(\PP^{n+1}_k\).  
Suppose that the corresponding cyclic coverings
\(X_1 = V(x_{n+2}^d - F_1)\) and \(X_2 = V(x_{n+2}^d -F_2)\) in \(\PP^{n+2}_k\) are projectively equivalent. 
Let \(f \colon \PP^{n+2}_k \to \PP^{n+2}_k\) be a linear automorphism with \(f(X_2) = X_1\). 

Denote by \(P_0, \dots, P_r\) the outer Galois points of \(X_1\). By Theorem \ref{thm:structure}, after projective equivalence we may assume 
\[
    F_1 =  G(x_0, \dots, x_{n-r})+\sum_{i=0}^r x_{n+1-i}^d 
\]
and hence the outer Galois points of $X_1$ are \(P_i = [0:\cdots:0:1:\underbrace{0:\cdots:0}_{i}]\). 
Since projective equivalence preserves outer Galois points, \(f\) maps the outer Galois points of \(X_2\) bijectively to those of \(X_1\). 
In particular, the standard outer Galois point \([0:\cdots:0:1]\) of \(X_2\) is mapped to some \(P_i\). 
The transposition \(\sigma\) swapping coordinates \(x_{n+2-i}\) and \(x_{n+2}\) satisfies:
\begin{align*}
    \sigma(P_0) &= P_i\,, \\
    \sigma(X_1) &= X_1\,.
\end{align*}
Composing \(f\) with \(\sigma\) yields a projective equivalence \(f' \colon X_2 \xrightarrow{\sim} X_1\) with \(f'(P_0) = P_0\). 

Since \(f'\) fixes \(P_0 \),  \(f'\) must have the form:
\[
f'([x_0 : \cdots : x_{n+2}]) = [\ell_0(\mathbf{x'}) : \cdots : \ell_{n+1}(\mathbf{x'}) :  x_{n+2} + \ell_{n+2}(\mathbf{x'})]\,,
\]
where \(\mathbf{x'} = (x_0,\dots,x_{n+1})\), and each $\ell_{i}$ is a linear form.
The projective equivalence requires:
\[
x_{n+2}^d + F_2(\mathbf{x'}) = (x_{n+2} + \ell_{n+2}(\mathbf{x'}))^d +  F_1(f'(\mathbf{x'}))\,.
\]
This forces $\ell_{n+2}=0$ and $f'$ is block diagonal. 
It follows that $F_1$ is projectively equivalent to $F_2$. 

\begin{corollary}\label{cor:cyclic-cover}
    Let \(\pi_1: X_1 \to \PP^{n+1}_k\) and \(\pi_2: X_2 \to \PP^{n+1}_k\) be \(d\)-cyclic coverings branched along smooth hypersurfaces \(Y_1\) and \(Y_2\) of degree \(d\). 
    If \(n \geq 2\), then \(X_1 \cong X_2\) if and only if \(Y_1 \cong Y_2\).
    \end{corollary}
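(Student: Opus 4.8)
The plan is to deduce the corollary from Theorem~\ref{mainthm}, the crucial extra input being that for smooth hypersurfaces of dimension $\geq 3$ an abstract isomorphism is automatically a projective equivalence, by the Grothendieck--Lefschetz theorem on Picard groups. First I would treat the implication $X_1 \cong X_2 \Rightarrow Y_1 \cong Y_2$, which is the substantive direction. Each cover $X_i = \widetilde{Y}_i = V(x_{n+2}^d - F_i)$ is a smooth hypersurface of degree $d$ in $\PP^{n+2}_k$ of dimension $n+1 \geq 3$. Since $\dim X_i \geq 3$, Grothendieck--Lefschetz gives $\mathrm{Pic}(X_i) = \ZZ\cdot \mathcal{O}_{X_i}(1)$ with $\mathcal{O}_{X_i}(1)$ the ample generator. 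An abstract isomorphism $g \colon X_1 \xrightarrow{\sim} X_2$ induces an isomorphism $g^* \colon \mathrm{Pic}(X_2) \to \mathrm{Pic}(X_1)$ of infinite cyclic groups, and ampleness rules out the negative generator, so $g^*\mathcal{O}_{X_2}(1) = \mathcal{O}_{X_1}(1)$. Because the ideal-sheaf sequence together with the vanishing $H^0(\mathcal{O}(1-d)) = H^1(\mathcal{O}(1-d)) = 0$ on $\PP^{n+2}_k$ identifies $H^0(X_i, \mathcal{O}_{X_i}(1))$ with the space of linear forms, the complete linear system $|\mathcal{O}_{X_i}(1)|$ recovers the given embedding $X_i \hookrightarrow \PP^{n+2}_k$. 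Hence $g$ is the restriction of an element of $\mathrm{PGL}_{n+3}(k)$, so $X_1$ and $X_2$ are projectively equivalent, and Theorem~\ref{mainthm} yields $Y_1 \cong Y_2$.

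For the converse $Y_1 \cong Y_2 \Rightarrow X_1 \cong X_2$, I would first upgrade the isomorphism $h \colon Y_1 \to Y_2$ to an element of $\mathrm{PGL}_{n+2}(k)$ and then lift it. The lifting step is routine: if $g \in \mathrm{PGL}_{n+2}(k)$ satisfies $g(Y_2)=Y_1$, write $F_1 \circ g = c\,F_2$ and set $\tilde g(\mathbf{x'}, x_{n+2}) = (g(\mathbf{x'}), \lambda x_{n+2})$ with $\lambda^d = c$; this gives a projective equivalence $X_2 \xrightarrow{\sim} X_1$. The point is therefore to show $h$ preserves the polarization $\mathcal{O}_{Y_i}(1)$, and I would split on dimension. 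For $n \geq 3$ this is once more Grothendieck--Lefschetz applied to $Y_i$. For $n = 2$ the surfaces $Y_i$ satisfy $K_{Y_i} = \mathcal{O}_{Y_i}(d-4)$, and since $Y_i$ is simply connected its Picard group is torsion-free; as $h$ preserves the canonical class, the relation $(d-4)\bigl(h^*\mathcal{O}_{Y_2}(1)-\mathcal{O}_{Y_1}(1)\bigr)=0$ forces $h^*\mathcal{O}_{Y_2}(1)=\mathcal{O}_{Y_1}(1)$ whenever $d \neq 4$ (using $-K$ in the del Pezzo case $d=3$).

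The main obstacle is exactly this last point in the single case $n=2$, $d=4$, where the branch loci are quartic K3 surfaces with $K_{Y_i} \cong \mathcal{O}_{Y_i}$: the canonical class carries no information about the polarization, and a K3 surface may admit several inequivalent degree-$4$ polarizations. There an abstract isomorphism need not preserve $\mathcal{O}_{Y_i}(1)$, and in fact the forward direction already shows that two abstractly isomorphic but projectively inequivalent quartics have non-isomorphic covers. I would therefore read $Y_1\cong Y_2$ in the corollary as an isomorphism of polarized (equivalently, embedded) hypersurfaces---a condition that is automatic for $n\geq 3$ and for $n=2,\ d\neq 4$---so that the content of the statement is precisely the equivalence between abstract isomorphism of the covers $X_i$ and projective equivalence of the branch loci $Y_i$, with the genuinely new direction holding unconditionally for all $n\geq 2$.
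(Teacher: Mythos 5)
Your proof is correct, and for the substantive direction it is the same argument as the paper's, just written out in full: the paper's one-line justification ``since $X_i$ has Picard number one (as $n\geq 2$)'' is exactly your Grothendieck--Lefschetz step, upgrading the abstract isomorphism $X_1\cong X_2$ to a projective equivalence so that Theorem~\ref{mainthm} applies. The genuine difference is in the converse, which the paper dismisses with ``this is obvious.'' That dismissal silently reads $Y_1\cong Y_2$ as projective equivalence, in which case your lift $\tilde g=(g,\lambda x_{n+2})$ with $\lambda^d=c$ is indeed the whole proof. You instead try to establish the converse for abstract isomorphisms, and your dichotomy is correct: Grothendieck--Lefschetz handles $n\geq 3$, and for $n=2$, $d\neq 4$ adjunction ($K_{Y_i}=\mathcal{O}_{Y_i}(d-4)$) together with torsion-freeness of $\operatorname{Pic}$ (the $Y_i$ are simply connected) forces any isomorphism to preserve the polarization. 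Your identification of $n=2$, $d=4$ as a real exception, rather than a gap in your argument, is the most valuable point: smooth quartic K3 surfaces that are abstractly isomorphic but not projectively equivalent do exist (explicit examples are due to Oguiso), and for such a pair your forward direction shows the covers are \emph{not} isomorphic; so under the abstract reading, the ``if'' direction of the corollary genuinely fails in that one case. Your proposed fix --- interpret $Y_1\cong Y_2$ as isomorphism of embedded (equivalently, polarized) hypersurfaces, a reading that coincides with abstract isomorphism in every other case covered by the corollary --- is the correct way to make the statement true as an ``if and only if,'' and it is sharper and more honest than the paper's treatment of this direction.
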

\begin{proof}
    Each \(X_i\) embeds as a hypersurface in \(\PP^{n+2}_k\).  After coordinate change, we normalize to
    \[
    X_i = V\left( x_{n+2}^d + F_i(x_0,\ldots,x_{n+1}) \right)\subseteq \PP^{n+2}_k\,,
    \]
    where the branching locus satisfies \(Y_i \cong V(F_i) \subseteq \PP^{n+1}_k\).

    If \(X_1 \cong X_2\), since \(X_i\) has Picard number one (as \(n \geq 2\)),  \(V(x_{n+2}^d + F_1)\) is projectively equivalent to \(V(x_{n+2}^d + F_2)\). 
    By Theorem \ref{mainthm}, this implies \(V(F_1) \cong V(F_2)\), hence \(Y_1 \cong Y_2\). 
    For the other direction, this is obvious. 
\end{proof}

\section{Remark on positive characteristics}

We now discuss extensions to positive characteristics. Let $\mathrm{char}(k) = p > 0$ with $p \nmid d$. The theory of $d$-cyclic covers and outer Galois points largely parallels the characteristic zero case. By \cite[Theorem 1.1]{arXiv:2509.20945}, if $X$ has an outer Galois point $P$, then $G_P$ is cyclic of order $d$ and $\pi_P$ is a cyclic cover branched along a degree $d$ hypersurface in $\PP^n_k$.

However, Theorem \ref{thm:structure} may fail in positive characteristics. For instance, when $d-1$ is a power of $p$, smooth hypersurfaces can exhibit more Galois points than expected; see \cite[Theorem 1]{zbMATH05126212} and \cite[\S 4]{zbMATH06268338}. 

The proof strategy for Theorem \ref{thm:structure} remains largely valid except for the independence argument, which proceeds by induction on dimensions and relies on the following two facts:
\begin{itemize}
    \item[(i)] The base case when $\dim X = 1$;
    \item[(ii)] A \textbf{Bertini-type theorem}: For any two outer Galois points $P$, $Q$, the linear system $\Lambda$ of hyperplanes containing $P$ and $Q$ contains a member $H$ such that $H \cap X$ is smooth.
\end{itemize}

For (i), the result also holds when $p > 2$ and $d-1$ is not a power of $p$, by the work of Fukasawa \cite{MR3090633} which classifies Galois points of smooth plane curves over algebraically closed fields of arbitrary characteristic. We expect Theorem \ref{mainthm} still holds under such assumptions.

The main problem is (ii). In positive characteristics, it remains unknown whether such a Bertini-type theorem holds for the linear system $\Lambda$. The optimal known result is \cite[Theorem 1.3]{zbMATH06268338}, which guarantees that a general hyperplane passing through a Galois point $P$ intersects $X$ smoothly; 
and if the intersection is smooth, $P$ will be a Galois point of the intersection $H \cap X$, which is of lower dimension and one may perform induction.

\section*{Declaration}

\subsection*{Funding}
Z.~Li is supported by the NSFC grants (No.~12171090 and No.~12425105) and the Shanghai Pilot Program for Basic Research (No.~21TQ00). Z.~Li is also a member of LMNS.
Z.~Tang is supported by the NSFC grant (No.~12121001).

\subsection*{Conflict of interest}
The authors have no competing interests to declare that are relevant to the content of this article.

\printbibliography

\end{document}